\renewcommand*\subjclass[2][2000]{%
  \def\@subjclass{#2}%
  \@ifundefined{subjclassname@#1}{%
    \ClassWarning{\@classname}{Unknown edition (#1) of Mathematics
      Subject Classification; using '1991'.}%
  }{%
    \@xp\let\@xp\subjclassname\csname subjclassname@#1\endcsname
  }%
}
\newtheorem{theorem}{Theorem}[section]
\newtheorem{lemma}[theorem]{Lemma}
\newtheorem{corollary}[theorem]{Corollary}
\newtheorem{proposition}[theorem]{Proposition}
\theoremstyle{definition}
\newtheorem{example}[theorem]{Example}
\newtheorem{remark}[theorem]{Remark}
\numberwithin{equation}{section}
\newcommand{\abs}[1]{\lvert#1\rvert}
\renewcommand*\subjclass[2][2000]{%
  \def\@subjclass{#2}%
  \@ifundefined{subjclassname@#1}{%
    \ClassWarning{\@classname}{Unknown edition (#1) of Mathematics
      Subject Classification; using '1991'.}%
  }{%
    \@xp\let\@xp\subjclassname\csname subjclassname@#1\endcsname
  }%
}
\newcommand{\R}{{\mathbb{R}}}
\renewcommand{\Re}{{\rm Re}}       % real part
\newcommand{\eps}{{\varepsilon}}
\def\NABLA#1{{\mathop{\nabla\kern-.5ex\lower1ex\hbox{$#1$}}}}
\def\Nabla#1{\nabla\kern-.5ex{}_{#1}}
\def\Tabla#1{\Tilde\nabla\kern-.5ex{}_{#1}}
\def\abs#1{\mathopen|#1\mathclose|}
\renewcommand{\Tilde}{\widetilde}
\begin{document}
\title[Radial extension of a bi-Lipschitz parametrization of a curve]{Radial extension of a bi-Lipschitz parametrization of a starlike Jordan curve}
\author{David Kalaj}
\address{Faculty of natural sciences and mathematics, University of Montenegro,
Cetinjski put b.b. 81000, Podgorica, Montenegro}
\email{davidk@t-com.me}

%\author{Djiordjije Vujadinovi\'c}
%\address{University of Montenegro, faculty of natural sciences and mathematics,
%Cetinjski put b.b. 81000, Podgorica, Montenegro}
%\email{djordjijevuj@t-com.me}

\subjclass {Primary 26A16, Secondary 30C62, 51F99, 53A04}
%\dedicatory{This paper is dedicated to our authors.}
\keywords{Quasiconformal mapping,
  Starlike Jordan curve, Lipschitz continuity}
\begin{abstract}
In this paper we discuss the radial extension $w$ of a bi-Lipschitz
parameterization $F(e^{it})=f(t)$ of a starlike Jordan curve
$\gamma$ w.r.t. the origin. We show that if the
parameterization is bi-Lipschitz, then the extension $w$ is
bi-Lipschitz and consequently quasiconformal for some constant
$\mathcal{K}_w\ge 1$. If $\gamma$ is the unit circle, then
$\mathrm{Lip}(f)=\mathrm{Lip}(F)=\mathrm{Lip}(w)$ and
$\mathcal{K}_w=\max\{\mathrm{Lip}(F),\mathrm{Lip}(F^{-1})\}$. Finally it is studied quasiconformal and Lipschitz behavior of a radial extension of an arbitrary parametrization of a starlike Jordan curve.
\end{abstract}
\maketitle %\tableofcontents
\section{Introduction}
Conformal mappings between plane domains are mappings which preserve
the angles between smooth curves. Quasiconformal mappings,
intuitively speaking are mappings which "quasi-preserve" the
angles. Quasiconformal mappings make one of most powerful connection
between complex analysis,  geometry and PDE. One of the most
important application of quasiconformal mappings is the conformal
representation of Euclidean surfaces. In this paper we will describe
all quasiconformal mappings between the unit disk and a starlike
domain with respect to the origin which are radial extensions of
boundary mappings.
\subsection{Quasiconformal mappings}
By $\mathbf U$ we denote the unit disk in the complex plane $\mathbf
C$. Its boundary is the unit circle $\mathbf T$. Let $D$ and
$\Omega$ be subdomains of the complex plane $\mathbf C$, and
$w=u+iv:D\to \Omega$ be a mapping that has both partial derivatives
at a point $z\in D$. We define the Jacobian matrix by
$$\nabla w(z)=\begin{pmatrix}
  u_{x} & u_{y} \\
  v_{x} & v_{y}
\end{pmatrix}.$$ Its operator norm and the Hilbert-Schmidt norm are
given by: \begin{equation*}|\nabla w(z)|:=\max_{|h|=1}|\nabla
w(z)h|=|w_z|+|w_{\bar z}|,\end{equation*} and $$\|\nabla
w\|:=\sqrt{|w_x|^2+|w_y|^2}.$$ We define the function
\begin{equation*}l(\nabla w)(z):=\min_{|h|=1}|\nabla w(z)h|=||w_z|-|w_{\bar
z}||,\end{equation*} and the Jacobian
\[J_w(z)=|w_z|^2-|w_{\bar z}|^2,\]
where
$$w_z := \frac{1}{2}\left(w_x+\frac{1}{i}w_y\right)\text{ and } w_{\bar z} := \frac{1}{2}\left(w_x-\frac{1}{i}w_y\right).$$
%We say that a function $u:D\to \mathbf R$ is $ACL$ (absolutely
%continuous on lines) in  the region $D$, if for every closed
%rectangle $R\subset D$ with sides parallel to the $x$ and $y$-axes,
%$u$ is absolutely continuous on a.e. horizontal and a.e. vertical
%line segment in $R$. Such a function has of course, partial
%derivatives $u_x$, $u_y$ a.e. in $D$.

An orientation preserving homeomorphism $w:D\to \mathbf{C}$ of
Sobolev class $W^{1,1}_{\mathrm{ loc}} (D ; \mathbf{C})$ is said to
be $K$-quasiconformal, $1\le K < \infty$, if
\begin{equation}\label{defqc} D_w(z):=\frac{|\nabla
w(z)|}{l(\nabla w)(z)}= \frac{|w_z|+|w_{\bar z}|}{|w_z|-|w_{\bar
z}|} \le K \ \ \ \text{a.e. on $D$},\end{equation} (cf. \cite{Ahl},
pp. 23--24). Notice that the condition (\ref{defqc}) can be written
as
$$|\mu_w(z)|\le k\quad \text{a.e. on $D$ where
$k=\frac{K-1}{K+1}$},$$ and $\mu_w(z):={w_{\bar z}}/{w_z}$ is the
complex dilatation of $w$ and in its equivalent form $${\|\nabla
w\|^2}\le \left(K+\frac{1}{K}\right){J_w}.$$  Sometimes instead of
$K$ quasiconformal we write $k$ quasiconformal.

\subsection{Lipschitz continuity}
A mapping $f:X\to Y$, between metric spaces $(X,d_X)$ and $(Y,d_Y)$
is said to be $\mathcal L-${\it Lipschitz} and $(\ell,\mathcal
L)-${\it bi-Lipschitz}, for some constants $0<\ell\le \mathcal L$,
if
$$d_Y(f(x),f(y))\le \mathcal L d_X(x,y), \ \ \textrm{for all}  \quad x,y\in X,$$ and $$\ell d_X(x,y)\le d_Y(f(x),f(y))\le \mathcal L\,d_X(x,y),
\ \ \textrm{for all} \quad x, y\in X$$ respectively. If $\ell=1/L$, then we say that $f$ is $L-$bi-Lipschitz. We define
$$\mathrm{Lip}(f): = \sup_{x\neq
y}\frac{d_Y(f(x),f(y))}{d_X(x,y)}.$$

\subsection{Quasiconformal extension}
A homeomorphism $\tilde f: \mathbf R \to \mathbf R$ is called $M-$
quasisymmetric if for all $x$ and $t>0$
$$\frac{\tilde f(x+t)-\tilde f(x)}{\tilde f(x)-\tilde f(x-t)}\le
M,$$ and $f(\infty)=\infty$. We easily can modify the previous
definition for self - homeomorphisms of the unit circle. It is well
known that every quasisymmetric function has quasiconformal
extension to the half-plane. We want to point out two most important
extensions: \emph{Beurling- Ahlfors extension} \cite{ba}, and the
\emph{barycentric extension} of Douady and Earle \cite{dc}.

Recall that a Jordan domain $\Omega$ and its boundary curve $\gamma$ is
\emph{starlike} w.r.t. the origin if
$\overline{\Omega}=\bigcup_{z\in \gamma}[0,z].$ Let $\Omega$ be a
starlike Jordan domain with respect to the origin.
 Let $\gamma=\partial \Omega$ and let $F:\mathbf T\to \gamma$ be a
homeomorphism. The \emph{radial extension} of a homeomorphism is
defined by $w(re^{it})=rF(e^{it})$ and it defines a homeomorphism of
the unit disk onto $\Omega$. Radial extension does not map smoothly.

Note this important and simple fact, if $\Omega$ is not starlike
w.r.t. $0$, then the radial extension is not a mapping between
$\mathbf U$ and $\Omega$. One of primary aims of this paper is to
describe all homeomorphisms, whose radial extensions are
quasiconformal. We will show that the extension is quasiconformal if
and only if it is bi-Lipschitz. In order to do this we will assume
that the Jordan curve is smooth and starlike with respect to the
origin.  It is well known that every bi-Lipschitz is quasiconformal.
The converse is not true. However, if the mapping is quasiconformal,
then it is H\"older continuous under some conditions on the
boundaries (see \cite{mar1} and \cite{vuorinen}). For connection
between these two concepts (bi-Lipschitz mappings and quasiconformal
mappings) we also refer to the paper \cite{bish}. A counterpart of
quasiconformal extensions of quasisymmetric functions is a Lipschitz
extension of Lipchitz functions. We refer for the latter topic to the recent
paper of Kovalev \cite{leonid}.

We say that a mapping $P:\mathbf T\to \gamma$ is a {\it polar
parametrization}, if $\arg P(e^{it})=t$. Thus
$P(e^{it})=r(t)e^{it}$, for some positive continuous function $r$,
such that $r(0)=r(2\pi)$.

For a given homeomorphism $F:\mathbf T\to \gamma$ define

\begin{itemize}
    \item ${f}:[0,2\pi]\to \gamma$, $\ f(t)=F(e^{it})$
    \item $w: \overline{\mathbf U} \to \overline{\Omega}$ and $w\colon\mathbf{C}\to\mathbf{C}$ with   $\ w(z) =|z|F(z/|z|)$.
\end{itemize} We call $w$ \emph{the radial extension} of $F$.
\emph{It follows from the definition that $w$ is a homeomorphism of $\mathbf{C}$ onto itself.}

Take $z=e^{it},w=e^{is}\in \mathbf T$. {\it The restriction of
spherical and the chordal distance on $\mathbf{T}$} between points
$z$ and $w$ are defined by
$$d_1(z,w)=\min\{|\arg z-\arg w|,2\pi-|\arg z-\arg w|\}=\min\{|t-s|,2\pi-|t-s|\} $$  and  $$ d_2(z,w)=|z-w|=2\left|{\sin\frac{t-s}2}\right|.$$ Notice that
$d_1\ge d_2$. For a given function $F$ define the following four
constants:
\begin{itemize}
    \item $l: = \mathrm{Lip}(f)$,
    \item $L :=\mathrm{Lip}(F)$,
    \item $\Lambda:=\mathrm{Lip}(w)$ and
    \item $\mathcal K=\mathrm{ess}\sup_z D_w(z)$.
\end{itemize}
In this paper we will compare these constants.

We will show that if $\gamma=\mathbf T$, then
$l=L=\Lambda\le\mathcal K$ (Theorem~\ref{cir}), provided that $L<\infty$. The condition
$\gamma=\mathbf T$ is essential, see Example~\ref{she}. However, if
$F$ is a polar parametrization of a starlike Jordan curve w.r.t.
$0$, then we will show the following interesting fact $l=L$
(Theorem~\ref{ara}). In the last section, we will show
that the radial extension of a mapping $F$ of the unit circle onto a smooth  Jordan curve $\gamma$, starlike w.r.t. $0$, and with tangent lines disjoint from the origin, is quasiconformal if and only if it is
bi-Lipschitz (Theorem~\ref{star}). Theorem~\ref{star} can be considered
as an extension of a special case (i.e. of the case $n=2$) of a
result of Martio and Srebro \cite{mar2}, where the authors
considered the quasiconformality of radial stretching map, of the
unit ball $B^n$ onto a domain $\Omega\subset\mathbf{R}^n$ satisfying
$\beta-$cone condition; that is radial extension of a polar
parametrization. Finally we provide two explicit examples.
\section{Preliminary results}
In this section we will derive some auxiliary results. Further we
will consider the case $\gamma=\mathbf T$. Since $|z-w|\le |\arg
z-\arg w|,\ \ \text{ for }\ \ z,w\in \mathbf T$ and $\mathbf
T\subset \overline{\mathbf U}$ it follows that
\begin{equation}\label{lll}l \le L \le\Lambda.\end{equation}  Recall
the following fundamental result of Rademacher:
(\cite[Theorem~6.15]{juha}). Every Lipschitz function in an open set of
$\mathbf R^n$ is differentiable almost everywhere.

%\begin{lemma}\label{dri}
%If $\varphi:\mathbf R\to \mathbf R$ is a $\mathcal{L}$ Lipschitz
%mapping, such that $\varphi(x+a) = \varphi(x) + b$ for some $a$ and
%$b$ and every $x$, then there exist a sequence of $C^\infty$
%$\mathcal{L}$ Lipschitz functions $\varphi_n:\mathbf R\to \mathbf R$
%such that $\varphi_n$ converges uniformly to $\varphi$, and
%$\varphi_n(x+a) = \varphi_n(x)+b$.
%\end{lemma}

%\begin{proof}
%This result is well-known. We refer to \cite[Lemma~2.5]{pisa}.
%\end{proof}

By Rademacher theorem,  and Mean value theorem, we have

 \begin{lemma}\label{ura}

For a
Lipschitz mapping $f$ defined in the interval $[0,2\pi]$ and a
Lipschitz mapping $w$ defined in the complex plane, we have
\begin{equation}\label{flip}\mathrm{Lip}(f) = \mathrm{ess}\sup_t|f'(t)|\end{equation} and

\begin{equation}\label{wlip}\mathrm{Lip}(w)
= \mathrm{ess}\sup_{z\in\mathbf{C}}|\nabla w(z)|.\end{equation} In particular if $z=re^{it}$ and $w(z)=rf(t):\mathbf{C}\to \mathbf{C}$, then \begin{equation}\label{expl0}\mathrm{Lip}(w) = \mathrm{ess}\sup_t
\frac{1}{2}\left(|f(t)-i f'(t)| +|f(t)+i {
f'}(t)|\right).\end{equation}
Further if $w(z)=rf(t):\mathbf{C}\to \mathbf{C}$ is a homeomorphism  and $w^{-1}:\mathbf{C}\to \mathbf{C}$  is Lipschitz, then \begin{equation}\label{expl1}\mathrm{Lip}(w^{-1}) = \frac{1}{2}\mathrm{ess}\sup_t
\left||f(t)-i f'(t)| -|f(t)+i {f'}(t)|\right|^{-1}.\end{equation}

\end{lemma}
\begin{proof} First notice that, since $[0,2\pi]$ and $\mathbf{C}$ are convex, then \eqref{flip} and \eqref{wlip} are well-known results. For the completeness we include their proofs here.
In order to prove \eqref{flip}, we observe that if $f$ is Lipschitz then $$f(x)=f(0)+\int_0^x \psi(t)dt,\ \ \ \text{ for some $\psi\in L^\infty([0,2\pi])$}$$ such that $ f'(t)=\psi(t)$ for a.e. $t\in[0,2\pi]$. Thus  $\mathrm{ess}\sup_t|f'(t)|=\mathrm{ess}\sup_t|\psi(t)|$. Hence $$|f(x)-f(y)|=|\int_x^y \psi(t)dt|\le \mathrm{ess}\sup_t|f'(t)| \cdot|x-y|.$$ This implies that $\mathrm{Lip}(f)\le  \mathrm{ess}\sup_t|f'(t)|$. In order to prove the opposite inequality, let $\varepsilon>0$ such that for some $t\in[0,2\pi]$, $|f'(t)|\ge \mathrm{ess}\sup_t|f'(t)|-\varepsilon$. Then $$\mathrm{Lip}(f)\ge \lim_{s\to t} \frac{|f(s)-f(t)|}{|t-s|}=|f'(t)|\ge \mathrm{ess}\sup_t|f'(t)|-\varepsilon.$$ Since $\varepsilon$ is arbitrary, we get $\mathrm{Lip}(f)\ge  \mathrm{ess}\sup_t|f'(t)|$. This concludes \eqref{flip}.

In order to prove \eqref{wlip}, for fixed $z_1$ and $z_2$ take $\lambda(t)=w(z_1+t(z_2-z_1))$. Assume that $w$ is Lipschitz. Then $$|\lambda(t)-\lambda(s)|=|w(z_1+t(z_2-z_1))-w(z_1+s(z_2-z_1))|\le \mathrm{Lip}(w)|z_1-z_2||s-t|.$$ Thus $\lambda$ is Lipschitz and by using the previous proof, by using the formula $\lambda'(t)=\nabla w(z_1+t(z_2-z_1))(z_2-z_1)$ we have $$|w(z_1)-w(z_2)|=|\lambda(1)-\lambda(0)|\le \mathrm{ess}\sup_t|\lambda'(t)|(1-0)\le \mathrm{ess}\sup_{z\in \mathbf{C}}|\nabla w(z)||z_2-z_1|.$$  Thus $$\mathrm{Lip}(w)
\le  \mathrm{ess}\sup_{z\in \mathbf{C}}|\nabla w(z)|.$$ Let $\varepsilon>0$ and assume that for some $z\in \mathbf{C}$, $$|\nabla w(z)|\ge \mathrm{ess}\sup_{z\in \mathbf{C}}|\nabla w(z)|-\varepsilon. $$ Then there is $h$ with $|h|=1$ such that $|\nabla w(z)|=|\nabla w(z) h|$. Also we have $w(z+k)=w(z)+\nabla w(z) k+o(|k|)$. By choosing $k=th$, $t>0$, we obtain $$\frac{|w(z+th)-w(z)|}{|th|}=\frac{|\nabla w(z) h|}{|h|}+O(t).$$ Thus $$\mathrm{Lip}(w)\ge \lim_{t\to 0}\frac{|w(z+th)-w(z)|}{|th|}=|\nabla w(z)|.$$ This and the fact that $\varepsilon>0$ is arbitrary yields \eqref{wlip}.

If $z=re^{it}$, then
$e^{2it}={z}/{\bar z}.$ Thus $2ie^{2it} t_z={1}/{\bar z}$
and  $2ie^{2it} t_{\bar z}=-{z}/{\bar z^2}.$ Therefore
$t_z = {1}/{(2i z)}$ and  $t_{\bar z} = -{1}/{(2i \bar
z)}.$ Moreover $\, r^2 = z\bar z.$ Hence
$r_z = {\bar z}/{(2r)}\,$ and $ \,r_{\bar z} = {z}/{(2r)}.$

Assume that $w(z) = r f(t).$ Then we have $$w_z = r_z f(t) +
rf'(t)t_z\ \ \text{ and }\ \  w_{\bar z} = r_{\bar z} f(t) +
rf'(t)t_{\bar z}.$$ Whence $$ w_z = \frac{\bar z}{2r} f(t) +
f'(t)\frac{r}{2i z}\ \ \text{ and }\ \ \ w_{\bar z} =
\frac{z}{2r}f(t) - f'(t) \frac{r}{2i \bar z}.$$ Thus
\begin{equation}\label{ww}|w_z|=\frac{1}{2}|f(t)-i f'(t)| \text{ and } |w_{\bar
z}|=\frac{1}{2}|f(t)+i f'(t)|\end{equation} and
\begin{equation}\label{nab}|\nabla w(z)|=\frac{1}{2}\left(|f(t)-i
f'(t)| +|f(t)+i f'(t)|\right),\end{equation} which
implies that
\begin{equation}\label{expl}\mathrm{Lip}(w) = \mathrm{ess}\sup_t
\frac{1}{2}\left(|f(t)-i f'(t)| +|f(t)+i {f'}(t)|\right).\end{equation} The proof of \eqref{expl1} is similar but this case we make use of the formula $\nabla w^{-1}(w(z))={\nabla w(z)}^{-1}$ and \eqref{ww} in order to obtain \[\begin{split}|\nabla w^{-1}(w(z))|&=\sup_{h\neq 0}\frac{|{\nabla w(z)}^{-1}h|}{|h|}\\&=\left(\inf_{k\neq 0}\frac{|\nabla w(z)k|}{|k|}\right)^{-1}\\&=\left(\inf_{|k|=1}|\nabla w(z)k|\right)^{-1}\\&=\frac{1}{l(\nabla w(z))}\\&={||w_z|-|w_{\bar z}||}^{-1}\\&=
\frac{1}{2}
\left||f(t)-i f'(t)| -|f(t)+i {
f'}(t)|\right|^{-1}.\end{split}\] We can now apply the previous proof in order to obtain $$\mathrm{Lip}(w^{-1})=\mathrm{ess}\sup_{\omega\in \mathbf{C}} |\nabla w^{-1}(\omega)|.$$ This finishes the proof of lemma.
\end{proof}
 Now we have the following theorem:
%\section{Radial extension of circle homeomorphisms}
\begin{theorem}\label{cir}
If $F(e^{ix})=e^{i\psi(x)}:\mathbf T\to \mathbf T$ is a bi-Lipschitz
mapping and $w$ its radial extension, then
\begin{equation}\label{llk}l=L=\Lambda=\mathrm{Lip}(\psi)=|{\psi'}|_\infty,\end{equation}  \begin{equation}\label{lin}
\mathrm{Lip}(w^{-1})=\mathrm{Lip}(\psi^{-1})=|1/{\psi'}|_\infty
\end{equation} and
\begin{equation}\label{kes}\mathcal
K=\max\{|{\psi'}|_\infty,|1/{\psi'}|_\infty\}.\end{equation} Here and in
the sequel by $|g|_\infty$ we mean the $L^\infty$ norm of $g$.
\end{theorem}
\begin{proof} Prove first that $w$ is Lipschitz if and only if $F$ is Lipschitz. Since one  direction is trivial, it remains to prove that $F$ is Lipschitz implies that $w$ is Lipschitz. For $z_1=r_1e^{it_1}$, $z_2=r_2 e^{it_2}$, $z_1\neq z_2$
\[\begin{split}|w(z_1)-w(z_2)|&=|r_1 F(e^{it_1})-r_2 F(e^{it_2})|\\&\le |r_2-r_1||F(e^{it_1})|+r_2|F(e^{it_1})- F(e^{it_2})|\\&\le |F|_\infty|r_2-r_1|+r_2\mathrm{Lip}(F)|e^{it_1}- e^{it_2}|\\&\le |F|_\infty|z_1-z_2|+\mathrm{Lip}(F)\left|z_2-z_1 \frac{|z_2|}{|z_1|}\right|
\\&\le |F|_\infty|z_2-z_1|+2 \mathrm{Lip}(F)|z_2-z_1|.\end{split}\] In the last inequality we used the following sequence of inequalities
$$\left|z_2-z_1 \frac{|z_2|}{|z_1|}\right|\le |z_2-z_1|+\left|z_1-z_1 \frac{|z_2|}{|z_1|}\right|=|z_2-z_1|+||z_1|-|z_2||\le 2|z_1-z_2|.$$
If $F$ is a mapping of the unit circle onto itself, then $$f(t)=F(e^{it})=
e^{i\psi(t)}$$ for some increasing bijective function
$\psi:[0,2\pi]\to [0,2\pi]$. Moreover, \begin{equation}\label{ft}f(t)+i f'(t)=
e^{i\psi(t)}(1-{\psi'}) \ \ \text{ and }\ \ f(t)-i f'(t)=
e^{i\psi(t)}(1+{\psi'}).\end{equation} Thus by \eqref{nab} we have $$|\nabla w(z)| =
\max\{1,{\psi'(t)}\},$$ and
\begin{equation}\label{opsi}\mathrm{Lip}(w)=|{\psi'}|_\infty.\end{equation} By \eqref{opsi} and \eqref{lll} we obtain \eqref{llk}. To finish the proof we have to notice that $F^{-1}(t)=e^{i\phi(t)}$, and $w^{-1}(z)=r e^{i\phi(t)}$, where $\phi=\psi^{-1}$. Thus $F^{-1}$ is Lipschitz if and only if $w^{-1}$ is Lipschitz. Moreover $(\psi^{-1})'(\psi(s))=1/\psi'(s)$ and this concludes the proof of \eqref{lin}. Thus $w$ is quasiconformal. In order to find the quasiconformality constant we make use of \eqref{ww} and \eqref{ft} in order to obtain $$K=\mathrm{ess}\sup_t\frac{|1+{\psi'}|+|1-{\psi'}|}{|1+{\psi'}|-|1-{\psi'}|}=\max\{|{\psi'}|_\infty,|1/{\psi'}|_\infty\}.$$
\end{proof}
From the previous theorem we infer the following corollaries:
\begin{corollary}\label{qo}\cite{trans}
Let $F(e^{it})=e^{i\psi(t)}: \mathbf T\to  \mathbf T$. Then $F$ is $L-$Lipschitz
continuous if and only if $\psi$ is
$L-$Lipschitz continuous.
\end{corollary}
\begin{corollary}
Let $w(z)=|z| F(z/|z|):\overline{\mathbf{U}}\to \overline{\mathbf{U}}$, where $F: \mathbf
T\to \mathbf T $. Then $w$ is $L-$Lipschitz if and only if $F$ is
$L-$Lipschitz.
\end{corollary}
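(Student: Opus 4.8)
The plan is to read this equivalence directly off Theorem~\ref{cir}. By the definitions of the constants, the Lipschitz constant of $w$ on $\overline{\mathbf U}$ (Euclidean metric) equals $\Lambda$, and the Lipschitz constant of $F$ on $\mathbf T$ (chordal metric) equals $L$; so ``$w$ is $L$-Lipschitz'' means $\Lambda\le L$, while ``$F$ is $L$-Lipschitz'' means $\mathrm{Lip}(F)\le L$. The whole content of the corollary is therefore the identity $\Lambda = L$.

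One implication is elementary and was already noted in \eqref{lll}. Since $w(e^{it}) = 1\cdot F(e^{it}) = F(e^{it})$, the map $F$ is the restriction of $w$ to the boundary $\mathbf T\subset\overline{\mathbf U}$; hence for $z,z'\in\mathbf T$ we get $|F(z)-F(z')| = |w(z)-w(z')|\le \Lambda\,|z-z'|$, which gives $L\le\Lambda$. In particular, if $w$ is $L$-Lipschitz then so is $F$.

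For the reverse direction I would invoke the nontrivial half of Theorem~\ref{cir}: because the target curve is $\gamma=\mathbf T$, that theorem yields the full chain $l=L=\Lambda=\mathcal K=|\psi'|_\infty$, and in particular $\Lambda=L$. Granting this, $F$ being $L$-Lipschitz, i.e.\ $\mathrm{Lip}(F)\le L$, forces $\Lambda=\mathrm{Lip}(w)\le L$, which is exactly the statement that $w$ is $L$-Lipschitz. There is no genuine obstacle here beyond keeping the two metric conventions straight: the substance is already packaged in the equality $L=\Lambda$ of Theorem~\ref{cir}, and the only step one really performs is to observe that $F$ is literally the boundary trace of $w$, so that the ``easy'' inequality $L\le\Lambda$ holds and the ``hard'' inequality $\Lambda\le L$ is inherited from the theorem.
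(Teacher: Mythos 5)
Your proposal is correct and matches the paper's own route: the paper states this corollary as an immediate consequence of Theorem~\ref{cir}, whose chain $l=L=\Lambda=\mathcal K$ contains the key identity $\mathrm{Lip}(F)=\mathrm{Lip}(w)$. Your write-up merely makes explicit the two halves the paper leaves implicit — the trivial inequality $L\le\Lambda$ from $F$ being the boundary restriction of $w$, and the reverse inequality $\Lambda\le L$ inherited from the theorem — so there is no substantive difference in approach.
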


\begin{remark}
The question arises, can we replace the unit circle by some other
starlike Jordan curve $\gamma$ in the previous statements. The
following example shows that in general we do not have that $l=L$.
\end{remark}
\begin{example}\label{she} Let $F(e^{it})=(-1+\min\{2\pi  - t,
t\}, (1/10)  \sin t).$ And $f(t) = F(e^{it})$. Then
$\mathrm{Lip}(f) =
{\sqrt{101}}/{10}<{\pi}/{2}\le \mathrm{Lip}(F).$ To obtain the Lipschitz constant of $f$ we use \eqref{flip}. We have
$$f'(t)=\left\{
          \begin{array}{ll}
            (1,1/10 \cos t), & \hbox{ if } t>\pi  \\
            (-1,1/10 \cos t), & \hbox{ if } t<\pi.
          \end{array}
        \right.
$$
Then $\mathrm{ess}\sup|f'(t)|={\sqrt{101}}/{10}$. Further for $0\le t\le \pi$ $$\frac{|F(e^{it})-F(-1)|^2}{|e^{it}+1|^2}=\frac{100 (\pi -t)^2+\sin^2t}{200 (1+\cos t)}.$$ Thus $$\frac{|F(1)-F(-1)|^2}{|1-(-1)|^2}=\frac{\pi^2}{4}.$$ This implies the claim of the example.
\end{example}

\section{Polar parametrization}
Let $\gamma$ be a starlike Jordan curve w.r.t.  the origin. Let
$f(t)=r(t)e^{it}$ be the { polar parametrization} of $\gamma$. In
this section we will prove the following interesting results.
\emph{For all polar parametrizations of starlike curves we have $l=L$ (Theorem~\ref{ara}).}

As we noticed before, \begin{equation}l=\sup_{t\neq
s}\frac{|f(s)-f(t)|}{|s-t|}\le L=\sup_{e^{it}\neq
e^{is}}\frac{|f(s)-f(t)|}{|e^{is}-e^{it}|}.\end{equation}

As $|s-t|> |e^{it}-e^{is}|$ for all $e^{it}\neq e^{is}$, one expects
that for some (or all) polar parametrizations $f$ we should have
$l<L$. However we have
\begin{theorem}\label{ara}
Let the function $f(t) = r(t) e^{it}:[0,2\pi]\to\gamma$ defines
a Lipschitz Jordan curve. Then
\begin{equation}\label{limi}\sup_{t\neq
s}\frac{|f(t)-f(s)|}{|e^{it}-e^{is}|}=\sup_s\lim\sup_{t\to s}
\frac{|f(t)-f(s)|}{|e^{it}-e^{is}|}.\end{equation}
\end{theorem}

\begin{remark}
%nontrivial only for Lipschitz continuous $r$.
It is well-known that
a Jordan curve $\gamma$ parameterized by a smooth function $[0,l]\ni
s\to g(s)\in \gamma$ is starlike w.r.t. 0, if and only if
$\mathrm{Im}(\overline{g(s)}{g'}(s))>0$ for every $s$ (see for example
\cite{cie}). In the case of smooth $r$ we have
$\mathrm{Im}(\overline{f(t)}f'(t))=r^2(t)>0$.
\end{remark}

\begin{proof}[Proof of Theorem~\ref{ara}]
Dividing $f$ by a positive constant, if it is necessarily, we may assume that $\mathrm{Lip}(f)= 1$, and therefore, in view of Lemma~\ref{ura} we have
\begin{equation}\label{rrp}r^2(t)+ (r'(t))^2\le 1, \text{for \ \ a.e.} \ \ \ t\in[0,2\pi].\end{equation} Note that $r(t)\le 1$. Our goal is to show that \begin{equation}\label{goal}\frac{|f(t)-f(s)|}{|e^{it}-e^{is}|}\le \mathrm{Lip}(f)\end{equation} i.e.
$$|r(t)e^{i(t-s)}-r(s)|\le |e^{i(t-s)}-1|$$ for all $t$. So we lose no generality in assuming that $s=0$. The previous inequality simplifies to
\begin{equation}\label{triv}2\cos t(1-r(0)r(t))\le 2-r(0)^2-r(t)^2\end{equation} which is trivially true
when $\cos t\le 0$. So we only have to deal with $0<t<\pi/2$.

We lose no generality in assuming that $r(t)\le r(0)$. Otherwise we
can consider the new function $R(s)=r(t-s)$. It is convenient to
write $r(0)=\cos \alpha$ where $0\le\alpha < \pi/2$. Integrating the
following differential inequality
$$r'(t)\ge -\sqrt{1-r(t)^2},  \text{for \ \ a.e.} \ \ \ t\in[0,2\pi] $$ (which follows from \eqref{rrp}), we find that
$$-\int_0^t\frac{dr(t)}{\sqrt{1-r^2(t)}}\le t$$ i.e. $\mathrm{arccos}(r(t))\le t+\alpha$ which can be written as
\begin{equation}r(t)\ge \cos(\alpha+t).\end{equation} Here we use the fact that $ r'\in L^\infty$, which implies the following simple equality $dr(t)=r'(t)dt$. The inequality
\eqref{triv} can be written as
\begin{equation}\label{triv21}r(t)^2-2r(t)\cos \alpha\cos t+\cos^2\alpha-2(1-\cos t)\le
0.\end{equation} So it is enough to show that for $\cos(t+\alpha)\le
R\le \cos\alpha$ the following relation is true
\begin{equation}\label{triv2}R^2-2R\cos \alpha\cos
t+\cos^2\alpha-2(1-\cos t)\le 0.\end{equation} Since the left hand
side is a convex function of $R$, it suffices to verify
\eqref{triv2} at the endpoints $R=\cos(\alpha+t)$ and
$R=\cos\alpha$. Putting $R=\cos(\alpha+t)$ into \eqref{triv2} we
get $-(1-\cos t)^2\le 0$. Putting $R=\cos\alpha$ into \eqref{triv2}
we get $-2\sin^2\alpha (1-\cos t)\le 0$.

To finish the proof observe that \[\begin{split}\mathrm{Lip}(f)&=\mathrm{ess}\sup_{t\neq s}\frac{|f(t)-f(s)|}{|t-s|}\\&=\mathrm{ess}\sup_s|f'(s)|=
\sup_s\lim\sup_{t\to s}
\frac{|f(t)-f(s)|}{|e^{it}-e^{is}|}\\&=\sup_s\lim\sup_{t\to s}
\frac{|f(t)-f(s)|}{|t-s|}.\end{split}\] The last relation follows from the fact that $\lim_{\tau \to 0}\frac{|e^\tau-1|}{|\tau|}=1$. This together with \eqref{goal} implies \eqref{limi}.
\end{proof}

\begin{corollary}\label{pams}
Together with the assumptions of Theorem~\ref{ara} assume that $r$
is a smooth function. Then
$$\sup_{e^{it}\neq
e^{is}}\frac{|f(s)-f(t)|}{|e^{is}-e^{it}|}=\max_{t}\sqrt{r^2(t)+{r'}^2(t)}.$$
\end{corollary}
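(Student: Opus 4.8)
The plan is to read the corollary off Theorem~\ref{ara} directly, converting the global supremum into a pointwise derivative computation. By \eqref{limi}, the quantity $L=\sup_{e^{it}\neq e^{is}}\frac{|f(s)-f(t)|}{|e^{is}-e^{it}|}$ equals $\sup_s\limsup_{t\to s}\frac{|f(t)-f(s)|}{|e^{it}-e^{is}|}$, so it suffices to evaluate the inner $\limsup$ for each fixed $s$ under the added hypothesis that $r$ is smooth. The point is that this extra regularity upgrades the $\limsup$ into an honest limit, which can then be computed by differentiation.

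First I would fix $s$ and analyze the ratio as $t\to s$. Since $r\in C^\infty$, the map $f(t)=r(t)e^{it}$ is continuously differentiable, so $|f(t)-f(s)|=|f'(s)|\,|t-s|+o(|t-s|)$. In the denominator, $|e^{it}-e^{is}|=2\left|\sin\frac{t-s}{2}\right|=|t-s|+o(|t-s|)$, which is exactly the estimate $\lim_{t\to s}\frac{|e^{it}-e^{is}|}{|t-s|}=1$ already used at the end of the proof of Theorem~\ref{ara}. Dividing, the ratio converges (so the $\limsup$ is a genuine limit) to
$$\limsup_{t\to s}\frac{|f(t)-f(s)|}{|e^{it}-e^{is}|}=\lim_{t\to s}\frac{|f(t)-f(s)|}{|t-s|}\cdot\frac{|t-s|}{|e^{it}-e^{is}|}=|f'(s)|.$$

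Next I would compute $|f'(s)|$ explicitly. Differentiating $f(t)=r(t)e^{it}$ gives $f'(t)=\bigl(r'(t)+i\,r(t)\bigr)e^{it}$, whence $|f'(t)|=\sqrt{r^2(t)+{r'}^2(t)}$. Substituting this into the formula from Theorem~\ref{ara} yields $L=\sup_s\sqrt{r^2(s)+{r'}^2(s)}$. Finally, since $r$ and $r'$ are continuous and $2\pi$-periodic (recall $r(0)=r(2\pi)$), the function $s\mapsto\sqrt{r^2(s)+{r'}^2(s)}$ is continuous on a compact period, so the supremum is attained and equals $\max_t\sqrt{r^2(t)+{r'}^2(t)}$, as claimed.

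I do not anticipate a genuine obstacle here: all of the analytic difficulty has been absorbed into Theorem~\ref{ara}, whose reduction of the global Lipschitz constant to a local $\limsup$ is precisely what makes a short, purely computational proof possible. The only point needing a word of care is the justification that the $\limsup$ is an actual limit, which is exactly where the smoothness (equivalently, differentiability of $f$ at $s$) is used; without it one would recover only an inequality between $L$ and the pointwise expression rather than the stated equality.
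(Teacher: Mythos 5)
Your proof is correct and takes essentially the approach the paper intends: the corollary is stated there without proof, as an immediate consequence of Theorem~\ref{ara}, and your argument (smoothness upgrades the $\limsup$ in \eqref{limi} to a genuine limit equal to $|f'(s)|=\sqrt{r^2(s)+{r'}^2(s)}$, after which the supremum is attained by continuity on a compact period) is exactly that intended reading.
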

By Theorem~\ref{ara} we obtain the following corollary:
\begin{corollary}\label{rrr}
Let $\gamma$ be a Jordan starlike curve w.r.t. the origin,
parameterized by polar coordinates $F(e^{it})=r(t)e^{it} :\mathbf
T\to \gamma$. Let
$$w(z) = |z|F\left(\frac{z}{|z|}\right):\overline{\mathbf U}\to \overline{\Omega}, z\neq0,\ \ w(0)=0 $$ be its
radial extension between the unit disk $\mathbf U$ and the Jordan domain
$\Omega=\mathrm{int}(\gamma)$. If $L:=\mathrm{Lip}(F) =
\mathrm{Lip}(w)<\infty$, then $L=\max\{|z|: z\in \gamma\}$.
\end{corollary}
\begin{proof}  Let $f(t)=F(e^{it})$. By using \eqref{flip}, \eqref{expl} and $f'(t)= ie^{it}r(t)+e^{it}r'(t)$ we have $$\mathrm{Lip}(f)=\mathrm{ess}\sup_t|ir(t)+r'(t)|$$ and $$\mathrm{Lip}(w)=\frac{1}{2}\mathrm{ess}\sup_t\left(|r'(t)|+|2r(t) -i
r'(t)|\right).$$ By Theorem~\ref{ara}, $\mathrm{Lip}(f)=\mathrm{Lip}(F)$.
So $\mathrm{Lip}(F) =\mathrm{Lip}(w)$  if and only if  \begin{equation}\label{foles}\mathrm{ess}\sup_t\left(|r'(t)|+|2r(t) -i
r'(t)|\right) =2\mathrm{ess}\sup_t|ir(t)+r'(t)|.\end{equation} From \eqref{foles} it follows that there is a set of zero Lesbegue measure $E$ such that for
$I= [0,2\pi]\setminus E$ we have \begin{equation}\label{fole}\sup_{t\in I}\left(|r'(t)|+|2r(t) -i
r'(t)|\right) =2\sup_{t\in I}|ir(t)+r'(t)|.\end{equation}
%or what is the same  \begin{equation}\label{fol}\mathrm{ess}\sup_t\left(\frac{|r'(t)|+|2r(t) -i
%r'(t)|}{r(t)}\right) =2\mathrm{ess}\sup_t\frac{|ir(t)+r'(t)|}{r(t)}.\end{equation}
Let $$A=2\sup_{t\in I}|ir(t)+r'(t)|$$ and assume that $t_n\in I$ is a sequence of points such that
 $$A=\lim_{n\to \infty}2|ir(t_n)+r'(t_n)|.$$ Then \begin{equation}\label{AA}A=\lim_{n\to\infty} \left(|r'(t_n)|+|2r(t_n) -i
r'(t_n)|\right).\end{equation} In order to prove \eqref{AA}, observe that $|r'(t_n)|+|2r(t_n) -i
r'(t_n)|\ge 2|ir(t_n)+r'(t_n)|.$ Thus $A\le \limsup_{n\to\infty} \left(|r'(t_n)|+|2r(t_n) -i
r'(t_n)|\right)$. But $$\limsup_{n\to\infty} \left(|r'(t_n)|+|2r(t_n) -i
r'(t_n)|\right)\le \sup_{t\in I}\left(|r'(t)|+|2r(t) -i
r'(t)|\right)=A.$$ This implies \eqref{AA}. Since $A<\infty$, there is a subsequence of $t_n$ which will be denoted also by $t_n$ such that $r(t_n)\to R$ and $r'(t_n)\to R'$ such that $$A=|R'|+|2R-I R'|=2|i R+R'|. $$

Since $$\alpha+\sqrt{4+\alpha^2}-2 \sqrt{1+\alpha^2}>0$$ for $\alpha>0$, it follows that $R'=0$. Thus $A= 2R\le 2 \max\{|z|: z\in \gamma\} $. Let $z_n=r(\tau_n)\in F(I)$ such that $\lim_{n\to \infty} |z_n|=\max\{|z|: z\in \gamma\}$. Then $A\ge \limsup_{n\to\infty}2 |r(\tau_n)+ir'(\tau_n)|\ge 2 L$. The conclusion is that $A=2 L=2 \max\{|z|: z\in \gamma\}$. The proof is completed.
\end{proof}

\section{A general parametrization}
%We call a curve $\gamma$ starlike with respect to $0\in\Omega=
%\mathrm{int}(\gamma)$, if $[0, z)\subset \Omega$ for $z\in \gamma$.
Assume that $D$ is a starlike domain in the complex plane $\mathbf{C}$. Let as recall two conditions on $D$.
\begin{subsection}{\bf $\alpha$-tangent condition.}\cite{kvw,gv} Suppose $D$ is a strictly starlike domain w.r.t. the origin $0$ and $x\in\partial D$. For each $z\in\partial D$, $z\neq x$, we let $\alpha(z,x)$
denote the acute angle which the segment $[z,x]$ makes with the ray from $0$ through $x$, and we define
$$\alpha(x)=\liminf_{z\rightarrow x}\alpha(z, x)\in [0,\pi/2].$$
If $\partial D$  has a tangent hyperplane at $x$ whose normal forms an acute angle $\theta$ with the ray from $0$ through $x$, then $$\alpha(x)=\pi/2-\theta.$$
We say a domain $D$ satisfies the {\it $\alpha$-tangent condition} if for every $x\in\partial D$ we have $\alpha(x)\ge \alpha\in(0,\pi/2]$.
%A domain $D$ satisfies the {\it $\alpha$-tangent condition} if $\partial D$ is smooth a.e. and for the smooth point $x\in\partial D$ we have %$\alpha(x)\ge \alpha\in(0,\pi/2]$.
%In a similar way we define $$\alpha'(x) = \limsup_{z\rightarrow x}\alpha(z, x)\in (0,\pi/2] \ \ \text{ and } \alpha':=\inf_{x\in\partial %
%D}\alpha'(x).$$
\end{subsection}
\begin{subsection}{\bf $\beta$-cone condition \cite{mar2}.} Suppose $D$ is a strictly starlike domain w.r.t. the origin $0$  and let $\beta\in(0,\pi/4]$. We say that $D$ satisfies the {\it $\beta$-cone condition} if the open cone
$$C(x,\beta):=\{z\in\mathbf{C}:|z-x|<|x|,
\left<z-x,x\right>>|x-z||x|\cos \beta\}$$
with vertex $x$ and central angle $\beta$ lies in $D$ whenever $x\in\partial D$. Note that if $D$ satisfies the $\beta$-cone condition, then $D$ is strictly starlike.
\end{subsection}
%===============================================================================
\begin{proposition}\cite{kvw}
A domain $D$ satisfies $\beta$-cone condition if and only if it satisfies $\alpha$-tangent condition.
\end{proposition}
Let $\gamma$ be a smooth starlike Jordan curve
w.r.t.  the origin in $\mathbf C$ such that every tangent line of
$\gamma$ is disjoint from the origin. We will recall some properties
of $\gamma$. Let $s \to r(s)e^{is}$ be the polar parametrization of
$\gamma$. The tangent $t_{s}$ of $\gamma$ at $\zeta=r(s)e^{is}$ is
defined by
$$y=r(s)e^{is}+(r'(s)+ir(s))e^{is} x,\
 \ \ x\in\mathbf{R}.$$ Following the notations in \cite{comp}, the angle
$\alpha_{s}$ between $\zeta$ and the positive oriented tangent at
$\zeta$ is given by
\begin{equation}\label{onesec}
\cos\alpha_{s }=
\frac{r'(s )}{ \sqrt{r^2(s) +{r'}^2(s )}}.
\end{equation}
Hence $$\sin\alpha_{s}=\frac{r(s)}{ \sqrt{r^2(s) +{r'}^2(s
)}}.$$ Consequently
\begin{equation}\label{one}
\cot\alpha_{s}=\frac{r'(s )}{r(s)}.
\end{equation}
Observe that for a smooth starlike Jordan curve $\gamma$ such that
every tangent line of $\gamma$ is disjoint from the origin, we have
\begin{equation*}0<\alpha_1=\min_{t} \alpha_t\le \max_t
\alpha_t =\alpha_2<\pi.\end{equation*} Put
\begin{equation}\label{alal}\alpha_\gamma=\min\{\alpha_1,\pi-\alpha_2\}.\end{equation} Then the domain bounded by $\gamma$  satisfies $\alpha$ ($\alpha_\gamma$)-tangent condition. We will simultaneously say that the curve $\gamma$ satisfies $\alpha-$condition.

Let $G:\mathbf T\to \gamma$ be a continuous locally injective
function from the unit circle $\mathbf T$ onto the star-like Jordan
curve $\gamma$. Then
$$g(t)=\rho(t)e^{i\psi(t)}=G(e^{it}),\, \, t \in [0,2\pi)$$ is a
parametrization of $\gamma$ which represents $g$. If $g$ is an
orientation preserving function then $\psi$ obviously is monotone
increasing. Suppose that $g$ is differentiable. Since
$r(\psi(t))=\rho(t)$, we deduce that $\rho'(t)={r}'(\psi(t))
\cdot {\psi'(t)}$. Hence
\begin{equation}\label{two}
{r'}(\psi(t))=\frac{\rho'(t)}{{\psi'(t)}}.
\end{equation}
By \eqref{two} and \eqref{one} we obtain \begin{equation}
\label{use} \rho'(t) = \rho(t){\psi'} \cot \alpha_{\psi(t) }.
\end{equation}
\begin{theorem}\label{star}
Let $\gamma$ be a smooth starlike Jordan curve with respect to the
origin such that every tangent line of $\gamma$ is disjoint from the
origin. Assume that $G(e^{it})=g(t)=\rho(t)e^{i\psi(t)} :\mathbf
T\to \gamma$ is a homeomorphism.  Let
$$w(z) = |z|G\left(\frac{z}{|z|}\right):\mathbf{C}\to \mathbf{C}.$$ Then the following conditions are equivalent
\begin{itemize}
    \item[(a)] $G$ is bi-Lipschitz,
    \item[(b)] $\psi$ is bi-Lipschitz,
    \item[(c)] $w$ is bi-Lipschitz,
    \item[(d)] $w$ is quasiconformal.
\end{itemize}
Moreover if $\psi$ is $L$ bi-Lipschitz,
then
\begin{itemize}
    \item[(e)]
 $w$ is $(\ell,\mathcal{L})$ bi-Lipschitz continuous, where
$$\mathcal{L} =\frac{|\rho|_\infty}{2\sin\alpha_\gamma}\left(\sqrt{L^2+\sin^2\alpha_\gamma(1-2L)}+\sqrt{L^2+\sin^2\alpha_\gamma(1+2L)}\right),$$\ $$\ell =\frac{4\mathrm{dist}^2(\gamma,0)}{L\mathcal L},$$
and
 \item[(f)] $w$ is $k-$quasiconformal, where $$k =
\sqrt{\frac{L^2+\sin^2\alpha_\gamma(1-2L)}{L^2+\sin^2\alpha_\gamma(1+2L)}}.$$
%$$K =\mathrm{ess\,sup}_{0\le
%t<2\pi}\frac{|{\dot g}+i g|+|{\dot g}-i g|}{|{\dot g}+ig|-|{\dot g}-ig|}\le K_0$$ and
%$$K_0=\frac{\left(\sqrt{L^2 + (1 + 2 L) \sin^2\alpha_\gamma} + \sqrt{
% L^2 + (1 - 2 L) \sin^2\alpha_\gamma}\right)^2}{4L
%\sin^2\alpha_\gamma}. $$
%
\end{itemize}
On the other hand if $w$ is $k-$quasiconformal, then
\begin{itemize}
    \item[(g)]  $\psi$ is $(\frac{1-k}{1+k},\frac{1+k}{1-k})$ bi-Lipschitz
\\
and

\item[(h)] $\sin \alpha_\gamma \ge \frac{1-k^2}{1+k^2}$, where
$\alpha_\gamma$ is defined in \eqref{alal}.
\end{itemize}
\end{theorem}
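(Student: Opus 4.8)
The plan is to push everything through the pointwise linear algebra of $\nabla w$ and then isolate the genuinely global (bi-Lipschitz) part of the statement.

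First I would record the derivatives. With $z=re^{it}$ and $g(t)=\rho(t)e^{i\psi(t)}$, formula \eqref{ww} gives $|w_z|=\tfrac12|g-ig'|$ and $|w_{\bar z}|=\tfrac12|g+ig'|$. Writing $g'=e^{i\psi}(\rho'+i\rho\psi')$ and inserting the identity \eqref{use}, namely $\rho'=\rho\psi'\cot\alpha_t$, a direct computation yields
\[
|w_z|=\frac{\rho}{2}\sqrt{1+2\psi'+\frac{{\psi'}^2}{\sin^2\alpha_t}},\qquad |w_{\bar z}|=\frac{\rho}{2}\sqrt{1-2\psi'+\frac{{\psi'}^2}{\sin^2\alpha_t}}.
\]
Hence the Jacobian is $J_w=|w_z|^2-|w_{\bar z}|^2=\rho^2\psi'$, the smaller singular value is $l(\nabla w)=J_w/|\nabla w|=\rho^2\psi'/|\nabla w|$, and
\[
|\mu_w|^2=\frac{|w_{\bar z}|^2}{|w_z|^2}=\frac{\sin^2\alpha_t\,(1-2\psi')+{\psi'}^2}{\sin^2\alpha_t\,(1+2\psi')+{\psi'}^2}.
\]
These four quantities drive the whole argument.

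For the equivalences I would close the cycle $(a)\Leftrightarrow(b)$ together with $(b)\Rightarrow(c)\Rightarrow(d)\Rightarrow(b)$. Here $(c)\Rightarrow(d)$ is the standard fact quoted in the introduction that a bi-Lipschitz homeomorphism is quasiconformal. For $(d)\Rightarrow(b)$: a quasiconformal $w$ is differentiable a.e. with $|\mu_w|\le k$ a.e., and rearranging the expression for $|\mu_w|^2$ turns this into $\tfrac{\psi'}{\sin^2\alpha_t}+\tfrac1{\psi'}\le\tfrac{2(1+k^2)}{1-k^2}$ a.e. Since $\sin^2\alpha_t\le1$ this forces $\psi'+\tfrac1{\psi'}\le\tfrac{2(1+k^2)}{1-k^2}$, and solving the quadratic gives exactly $\tfrac{1-k}{1+k}\le\psi'\le\tfrac{1+k}{1-k}$; as the ACL property of $w$ makes the monotone $\psi$ absolutely continuous, these a.e. bounds give $(i')$ and show $\psi$ is bi-Lipschitz. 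Evaluating the same inequality where $\sin\alpha_t=\sin\alpha_\gamma$ and applying AM--GM, $\tfrac{\psi'}{\sin^2\alpha_\gamma}+\tfrac1{\psi'}\ge\tfrac{2}{\sin\alpha_\gamma}$, gives $\sin\alpha_\gamma\ge\tfrac{1-k^2}{1+k^2}\ge\tfrac{1-k}{1+k}$, which is $(ii')$. For $(a)\Leftrightarrow(b)$ I would use $|g'(t)|=\rho(t)\psi'(t)/\sin\alpha_t$ (from $|g'|^2={\rho'}^2+\rho^2{\psi'}^2$ and \eqref{use}); since $\rho$ and $\sin\alpha_t$ are bounded away from $0$ and $\infty$ on a smooth starlike curve, $|g'|$ is comparable to $\psi'$, and, $\gamma$ being a smooth (hence chord-arc) Jordan curve, bi-Lipschitz continuity of $G$ is equivalent to that of $\psi$.

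The sharp constants I would extract directly from the pointwise formulas, since the source $\mathbf U$ is convex and hence $\Lambda=\mathrm{ess}\sup_z|\nabla w|$. Writing
\[
|\nabla w|=\frac{\rho}{2\sin\alpha_t}\left(\sqrt{{\psi'}^2+\sin^2\alpha_t(1+2\psi')}+\sqrt{{\psi'}^2+\sin^2\alpha_t(1-2\psi')}\right),
\]
I would differentiate to check that $|\nabla w|/\rho$ is increasing in $\psi'$ and decreasing in $\sin^2\alpha_t$ — both derivatives collapse to manifestly signed expressions — so the supremum over $\psi'\in[1/L,L]$, $\sin\alpha_t\ge\sin\alpha_\gamma$, $\rho\le|\rho|_\infty$ is attained at $\psi'=L$, $\sin\alpha_t=\sin\alpha_\gamma$, $\rho=|\rho|_\infty$, which is exactly $\mathcal L$ of $(i)$. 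The same monotonicity applied to $|\mu_w|^2$ pins its maximum at $\psi'=L$, $\sin\alpha_t=\sin\alpha_\gamma$, giving the value $k$ in $(ii)$. The lower constant is then $\ell=\mathrm{ess}\inf_z l(\nabla w)=\mathrm{ess}\inf_z \rho^2\psi'/|\nabla w|\ge \mathrm{dist}^2(\gamma,0)/(L\mathcal L)$.

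The main obstacle is $(b)\Rightarrow(c)$, i.e. converting the pointwise bound $l(\nabla w)\ge\ell$ into the genuine co-Lipschitz estimate $|w(z_1)-w(z_2)|\ge\ell|z_1-z_2|$. The upper bound $|w(z_1)-w(z_2)|\le\Lambda|z_1-z_2|$ is immediate from convexity of $\mathbf U$, but the lower bound is equivalent to Lipschitz continuity of $w^{-1}$, whose domain $\Omega$ is only starlike, not convex, so the mean-value identification of $\mathrm{Lip}$ with the essential supremum of the derivative is not directly available. I would factor $w=R\circ S$, where $S(re^{it})=re^{i\psi(t)}$ is the radial extension of the circle homeomorphism $\psi$ — bi-Lipschitz with both constants controlled by $L$ via Theorem~\ref{cir} — and $R$ is the canonical polar radial extension of the fixed smooth curve $\gamma$. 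This isolates the difficulty in the bi-Lipschitz continuity of $R$, which I would establish from the boundedness of $\rho,\rho'$ and of $\alpha_\gamma$ away from $0,\pi$, using that a smooth starlike $\Omega$ is quasiconvex. Tracking the constants so as to recover the clean value $\ell=\mathrm{dist}^2(\gamma,0)/(L\mathcal L)$, rather than a weaker bound carrying the quasiconvexity constant of $\Omega$, is the delicate point of the whole argument.
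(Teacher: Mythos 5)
Your computational core coincides with the paper's own proof: the same formulas for $|w_z|,|w_{\bar z}|$ obtained from \eqref{ww} and \eqref{use}, the Jacobian identity $J_w=\rho^2\psi'$, the dilatation formula \eqref{ka}, and the constants in $(i)$, $(ii)$ by maximizing over $\psi'\in[1/L,L]$, $\sin\alpha_t\ge\sin\alpha_\gamma$, $\rho\le|\rho|_\infty$. Where you genuinely depart from the paper is the direction $d)\Rightarrow b)$: the paper argues in two stages (first $\psi'\le\frac{1+k}{1-k}$ by replacing $\sin^2\alpha_t$ by $1$ in \eqref{ka}, then the bound \eqref{ala}, then a monotonicity argument in $\alpha_t$ for the lower bound on $\psi'$), whereas your rearrangement of $|\mu_w|^2\le k^2$ into $\frac{\psi'}{\sin^2\alpha_t}+\frac{1}{\psi'}\le\frac{2(1+k^2)}{1-k^2}$ followed by the quadratic inequality and AM--GM yields both bounds of $(i')$ at once and in fact the sharper estimate $\sin\alpha_\gamma\ge\frac{1-k^2}{1+k^2}$, which implies $(ii')$. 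This is cleaner than the paper's route, and your remark that the ACL property is what makes the monotone $\psi$ absolutely continuous (so that a.e.\ derivative bounds integrate to Lipschitz bounds) is a point the paper passes over in silence.

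Two caveats. First, a small repair is needed in your treatment of $(ii)$: $|\mu_w|^2$ is \emph{not} monotone in $\psi'$; differentiating \eqref{ka} in $\psi'$ gives a sign proportional to ${\psi'}^2-\sin^2\alpha_t$, so $\kappa$ decreases and then increases on $[1/L,L]$. The maximum is thus at an endpoint, and one must still compare the two endpoint values; this is exactly the paper's explicit computation $\kappa_2-\kappa_1\ge 0$. Your appeal to "the same monotonicity" glosses this, but the fix is routine. Second, concerning the co-Lipschitz bound in $b)\Rightarrow c)$: you are right that this is the genuinely delicate point, and you should know that the paper does not address it at all --- its entire proof of $(i)$ is the product identity $(|g'+ig|+|g'-ig|)(|g'+ig|-|g'-ig|)=4\rho^2\psi'$ together with \eqref{ww}, i.e.\ the pointwise bound $l(\nabla w)\ge\ell$, with no argument that this lower bound globalizes over the non-convex target $\Omega$ (the mean-value argument only works toward the convex side $\mathbf U$). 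Your factorization $w=R\circ S$, with $S$ controlled by Theorem~\ref{cir} and $R$ the fixed polar radial extension handled via quasiconvexity of the smooth Jordan domain, does yield the qualitative bi-Lipschitz property, which is all that the equivalence of a)--d) requires; so your proof of the equivalences is complete, while the exact value $\ell=\mathrm{dist}^2(\gamma,0)/(L\mathcal L)$ in $(i)$ remains justified, both in your write-up and in the paper itself, only up to the quasiconvexity constant of $\Omega$.
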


\begin{remark}
If $\gamma$ is the unit circle, then $\alpha_\gamma=\frac{\pi}{2}$,
and consequently Theorem~\ref{star} contains Theorem~\ref{cir}.
In particular for $\psi$ being an identity map, we obtain the
mapping $w(z)=|z|\rho(t)e^{it}$ which is called \emph{radial
stretching} (see \cite{mar2}). The definition of radial stretching
maps can be applied to several dimensional case as well. In the same
paper Martio and Srebro proved Theorem~\ref{star} for radial
stretching maps in multidimensional setting and for starlike domains
satisfying $\beta-$cone condition.  For a possible variation of the previous
problem to the half-plane we refer to \cite{mate2}.
\end{remark}

\begin{proof} Notice first that $w(z)=\Phi\circ \Psi(z)$, where $\Psi(z)=|z|e^{i\psi(t)}$ and $\Phi(z)=|z|r(t)e^{it}$. By Theorem~\ref{cir} the mapping $\Psi$ is bi-Lipschitz if and only if $\psi$ is bi-Lipschitz with the same bi-Lipschitz constant. Further the radial stretching $\Phi(z)$  is bi-Lipschitz provided that the curve $\gamma$ is smooth and every tangent line of $\gamma$ is disjoint from the
origin (\cite[Theorem~4.8]{kvw}). Thus $G(e^{it})=\Phi\circ \Psi(e^{it})$ is bi-Lipschitz (in the view of the fact that $\Phi$ is a priory bi-Lipschitz) if and only if $\psi$ is bi-Lipschitz. This implies (a) $\Leftrightarrow$ (b) $\Leftrightarrow$ (c). Further we already know that (c) $\Rightarrow$ (d) and it remains to prove the opposite direction. Again by applying \cite[Theorem~4.8]{kvw} we have that $w(z)=\Phi\circ \Psi(z)$ is quasiconformal if and only if $\Psi$ is quasiconformal. By making use of Theorem~\ref{cir} we conclude that this is equivalent to the fact that $\psi$ is bi-Lipschitz.

From \eqref{use}, for $\tau=\psi(t)$ we obtain
$$|{ g'}+i
g|\pm|{ g'}-i
g|={\rho}\left(\sqrt{\frac{(\psi')^2}{\sin^2\alpha_\tau}+1+2{\psi'}}\pm\sqrt{\frac{(\psi')^2}{\sin^2\alpha_\tau}+1-2{\psi'}}\right),$$
and consequently $$(|{ g'}+i g|+|{ g'}-i g|)(|{ g'}+i
g|-|{ g'}-i g|)=4\rho^2{\psi'}.$$ Thus $$|{ g'}+i
g|+|{ g'}-i
g|\le 2\mathcal{L}$$ and $$(|{ g'}+i
g|-|{g'}-i g|)^{-1}\le \frac{\mathcal{L}L}{2\mathrm{dist}^2(\gamma,0)}.$$ By making use of Lemma~\ref{ura},
we obtain $(e)$. Further by \eqref{ww} we have
$$|\mu_w(re^{it})|^2 =
\frac{(1-{\psi'})^2\rho^2+{\rho'}^2(t)}{{(1+{\psi'})^2\rho^2+{(\rho')}^2(t)}}.$$
Since $\dot\rho(t) = \rho(t){\psi'} \cot \alpha_\tau$ it follows
that
\begin{equation}\label{ka}|\mu_w(re^{it})|^2=
\frac{{(\psi')^2}+(1-2{\psi'}){\sin^2\alpha_\tau}}{{(\psi')^2}+(1+2{\psi'}){\sin^2\alpha_\tau}}.\end{equation}
Furthermore, since $L=\max\{|{\psi'}|_\infty,|1/{\psi'}|_\infty\}$, it follows that $L\ge
1$.

Let $$\omega(A)=\frac{A^2+(1-2A)\sin^2\alpha_\tau}{A^2+(1+2A)\sin^2\alpha_\tau}, \ \ \ A\in[1/L,L].$$ Then $$\omega'(A)=\frac{4\sin^2\alpha_\tau(A+\sin\alpha_\tau)(A-\sin\alpha_\tau)}{({A^2+(1+2A)\sin^2\alpha_\tau})^2}.$$ Thus $\omega(A)$ is increasing for $A\in[\sin\alpha_\tau, L]$ and is decreasing for $A\in[1/L, \sin\alpha_\tau]$, provided $1/L<\sin\alpha_\tau$. This implies that $$\omega(A)\le \max\{\omega(1/L),\omega(L)\}.$$
Since  $$\omega(L)-\omega(1/L)=
\frac{\sin^2(2\alpha_\tau)L(L^2-1)}{\left({{L}^2}+(1+2L){\sin^2\alpha_\tau}\right)\left({{1}+(L^2+{2}{L}){\sin^2\alpha_\tau}}\right)}\ge 0,$$ we have that $\max\{\omega(1/L),\omega(L)\}=\omega(L)$ and therefore $$|\mu_w(re^{it})|^2\le \frac{L^2+\sin^2\alpha_\tau(1-2L)}{L^2+\sin^2\alpha_\tau(1+2L)}.$$
Thus $w$ is $k-$quasiconformal where
$$k =
\sqrt{\frac{L^2+\sin^2\alpha_\gamma(1-2L)}{L^2+\sin^2\alpha_\gamma(1+2L)}}.$$
This concludes the proof of (f).

 If $w$ is
$k-$quasiconformal, then $w$ is differentiable almost everywhere. So we can apply \eqref{ka} together with  $|\mu_w(re^{it})|^2\le k^2$, which is equivalent to the following inequalities
$$\frac{1-k}{1+k}\le \psi'\le \frac{1+k}{1-k}\ \ \ \text{and} \ \ \ \frac{\psi'^2(1-k^2)}{2\psi'(1+k^2)-1+k^2}\le \sin^2\alpha_\tau.$$
Since the minimal value of the expression $\frac{A^2(1-k^2)}{2A(1+k^2)-1+k^2}$ for $A\in(\frac{1-k}{1+k},  \frac{1+k}{1-k})$ is attained for $A=\frac{1-k^2}{1+k^2}$ we obtain that $$\sin^2\alpha_\tau\ge \frac{(1-k^2)^2}{(1+k^2)^2},$$ i.e. $$\sin\alpha_\tau\ge \frac{1-k^2}{1+k^2}.$$
This finishes the proof of $(g)$ and $(h)$.
\end{proof}
\begin{remark}
The question arises, which homeomorphism of the unit circle onto a
smooth starlike Jordan curve $\gamma$ induces a radial
quasiconformal mapping with the smallest constant of
quasiconformality. It follows from Theorem~\ref{star}, h) that if there
exists a $K$-quasiconformal radial mapping between the unit disk and
a smooth starlike domain, then $K\ge \cot (\alpha_\gamma/2)$. We expect that, in the notation of Theorem~\ref{star}, the optimal $\psi$ is $\psi(t)\equiv t$.  If instead of $w(z)=|z|F(e^{it})$, we take $w(z)=|z|^\alpha F(e^{it})$, then the constant $K= \cot (\alpha_\gamma/2)$ is attained by the function $w(z)=|z|^{\csc \alpha}F(e^{it})$ (\cite{kvw,{gv}}). To
motivate the previous question, recall the Teichm\"uller problem.
For a given $M$-quasisymmetric selfmapping of the unit circle or
(equivalently) of the real line, find an extension with minimal
constant of quasiconformality. This problem is related to the
extremal quasiconformal mappings. Concerning this topic we refer to the
papers \cite{blmm,mate,reic}.
\end{remark}

\begin{example} Let $0<b\le a$ and  let $$w(z)=|z| \phi(t)=|z| \varphi(e^{it})=|z|\left(\frac{\cos^2
t}{a^2}+\frac{\sin^2 t}{b^2}\right)^{-1/2} e^{it}$$ be a radial
mapping of the unit disk onto the interior of the ellipse
$$E(a,b)=\left\{(x,y):\frac{x^2}{a^2}+\frac{y^2}{b^2}=1\right\}.$$
Then by making use of Corollary~\ref{pams}, we have
\[\begin{split}\mathrm{Lip}(\varphi)=\mathrm{Lip}(\phi) =\left\{
                                                           \begin{array}{ll}
                                                             \frac{2 (a^2 + b^2)^{3/2}}{3\sqrt 3 a b}, & \hbox{if $a^2\ge 2 b^2$;} \\
                                                             a, & \hbox{if $a^2\le 2 b^2$. }
                                                           \end{array}
                                                         \right.
\end{split}\]  On the other hand by Corollary~\ref{rrr} for $a^2> 2 b^2$ we have
$\mathrm{Lip}(\varphi)<\mathrm{Lip}(w),$ because in contrary $\mathrm{Lip}(\varphi)=\mathrm{Lip}(w)=a$ what is not the case.
Moreover
 $ w$ is a $K$ quasiconformal mapping
where \begin{equation}\label{K}K = \frac{a^4 + 6 a^2 b^2 + b^4 +
 \sqrt{14 a^2b^2 + a^4 + b^4} \abs{a^2 - b^2}}{8 a^2
 b^2}.\end{equation}
To show \eqref{K}, we begin by
%
%$$\sin^2 \alpha_\tau = \frac{(b^2 \cos^2 t + a^2 \sin^2t)^2}{(b^4 \cos^2 t + a^4 \sin^2
%t)}.$$
%
$$\frac{\|\nabla w\|^2}{J_w}=
\frac{7 + 2 c^2 + 7 c^4 - b 8 (-1 + c^4) \cos(2 t) + (-1 + c^2)^2
\cos(4 t)}{
 8 (\cos^2 t +
   c^2 \sin^2 t)^2},$$ where $c = \frac ab$.
The minimum is $2$ and is achieved for $t = 0$, $t=\pi$, $t=\pi/2$
or $t=-\pi/2$ and the maximum for $$t = \pm\text{arccos}(\pm
\frac{c}{\sqrt{1 + c^2}}).$$ The maximum is equal to
$$\frac{1 + 6 c^2 + c^4}{4 c^2}.$$ The larger solution of the
equation $$\left(K+\frac 1K\right) =\frac{1 + 6 c^2 + c^4}{4 c^2}$$
is given by \eqref{K}.
\end{example}

\begin{example}
Let $$f(s)=F(e^{is})=\min\left\{\frac{1}{|\sin s|},\frac{1}{|\cos
s|}\right\}e^{is}.$$ Then $f$ is a polar parametrization of the unit
square $Q=\{(x,y): \max\{|x|,|y|\}= 1\}.$ Moreover, by Theorem~\ref{ara} and Corollary~\ref{rrr}, we
have
$$\mathrm{Lip}(f) = \mathrm{Lip}(F)=2<\mathrm{Lip}(w).$$  To obtain the Lipschitz and quasiconformality constant of $w$ we make use of formulas from Theorem~\ref{star}, e), f). But, since the square is not smooth, we cannot directly use Theorem~\ref{star}. However the same formulas holds as well with $\alpha_\gamma=\frac{\pi}{4}$, (\cite[Theorem~4.8]{kvw}) for the special case $\psi(t)\equiv t$. We obtain $$\mathrm{Lip}(w)=\frac 12 (\sqrt{2} +
\sqrt{10})\approx 2.28825$$ and $$ K =\frac{1}{2} (3 + \sqrt{5})\approx 2.61803.$$
\end{example}

\subsection*{Acknowledgement} I am thankful to the referee for numerous corrections,  comments and suggestions  that have improved this paper. I am thankful to Professor Matti
Vuorinen who drew my attention to consider the quasiconformality of
radial extension.  The idea of the proof of Theorem~\ref{ara} is due
to Professor Leonid Kovalev \cite{leonid1}, and we includes his
proof because it is more elegant than the original proof.

\end{document}